\providecommand{\U}[1]{\protect\rule{.1in}{.1in}}
\theoremstyle{plain}
\newtheorem{thm}{Theorem}[section]
\newtheorem{lemma}{Lemma}[section]
\newtheorem{cor}{Corollary}[section]
\numberwithin{equation}{section}
\newcommand{\beq}[1]{\begin{equation} \label{#1}}
\newcommand{\eeq}{\end{equation}}
\newcommand{\bed}{\begin{displaymath}}
\newcommand{\eed}{\end{displaymath}}
\newcommand{\bea}{$$\begin{array}{ll}}
\newcommand{\eea}{\end{array}$$}
\newcommand{\barray}{\begin{array}{ll}}
\newcommand{\earray}{\end{array}}
\renewcommand{\(}{\Big(}
\renewcommand{\)}{\Big)}
\renewcommand{\|}{\Big|}
\newcommand{\p}{\partial}
\newcommand{\bna}{\begin{eqnarray*}}
\newcommand{\ena}{\end{eqnarray*}}
\begin{document}
  \title[the Chang-Marshall inequality for Sobolev functions]{The Chang-Marshall Trace Inequality for Sobolev functions in domains in higher dimensional space $\mathbb{R}^n$}
  \author{Jungang Li}
  \address{Department of Mathematics \\
  Brown University \\
  Providence, RI 02912, USA}
  \email{jungang\_li@brown.edu}
  \author{Guozhen Lu}
  \address{Department of Mathematics \\
  University of Connecticut \\
  Storrs, CT 06269, USA}
  \email{guozhen.lu@uconn.edu}

\begin{abstract}
The main purpose is to address a question proposed to us by S. Y. Alice Chang who asked whether the Chang-Marshall inequality for holomorphic functions on unit disk on the complex plane holds for Sobolev functions on general domains in higher dimensional Euclidean space $\mathbb{R}^n$ for all $n\ge 2$. We answer her question affirmatively.

\end{abstract}

  \thanks{Research is partly supported by a Simons collaboration grant from the Simons Foundation.}

  \date{}
  \subjclass{42B35}
  \keywords{Change-Marshall inequality, Moser-Trudinger inequality, Sobolev trace inequality.}
  \dedicatory{ }
  \maketitle

\section{Introduction}

In $\mathbb{R}^n$, it is well known that for any $1 \leq p < n$, there exists a constant $C$ such that the following Sobolev inequality holds
\begin{equation*}\label{Sobolev Inequality}
  ||u||_{L^{p^*}(\mathbb{R}^n)} \leq C ||\nabla u||_{L^p(\mathbb{R}^n)},
\end{equation*}
where $u \in C_0^1(\mathbb{R}^n)$ and $p^*$ denotes the Sobolev conjugation of $p$, i.e. $\frac{1}{p^*} = \frac{1}{p} - \frac{1}{n}$. By this Sobolev inequality, one can conclude that for any bounded domain $\Omega \subset \mathbb{R}^n$, the Sobolev space $W^{1,p}_0(\Omega)$ can be embedded into $L^{p^*}(\Omega)$. As a borderline case, when $p = n$, Pohozaev \cite{Po}, Trudinger \cite{Tru} and Yudovic \cite{Yu} independently discovered the embedding of exponential type on bounded domains $\Omega\subset \mathbb{R}^n$.  Namely, the embedding $W_{0}^{1,n}\left(\Omega\right)  \subset L_{\varphi_{n}}\left(  \Omega\right)  $,  where  $L_{\varphi_{n}}\left( \Omega\right)  $ is the
Orlicz space associated with the Young function $\varphi_{n}(t)=\exp\left(\beta\left\vert t\right\vert ^{n/(n-1)}\right)  -1$ for some $\beta>0$. Subsequently,  using the Schwarz rearrangement, Moser \cite{Mo} sharpened this embedding and proved the following:

\begin{thm}\label{Trudinger-Moser}
  There exists a constant $C$ such that the following inequality holds

  \begin{equation}\label{Trudinger-Moser Inequality}
    \sup_{u \in W_0^{1,n}(\Omega), ||\nabla u||_{L^n(\Omega)} \leq 1} \frac{1}{ |\Omega| } \int_\Omega e^{\alpha_n |u|^{\frac{n}{n-1}}} dx \leq C
  \end{equation}
  where $\alpha_n = n \omega_{n-1}^{\frac{1}{n-1}}$ and $\omega_{n-1}$ denotes the Hausdorff measure of $\mathbb{S}^{n-1}$. Moreover, (\ref{Trudinger-Moser Inequality}) is sharp in the sense that for any $\beta > \alpha_n$, there exists a sequence $\{ u_k \}$ satisfying $u_k \in W_0^{1,n}(\Omega)$ and $||\nabla u_k||_{L^n (\Omega)} \leq 1$, such that the integration in (\ref{Trudinger-Moser Inequality}) blows up, i.e. $\int_\Omega e^{\beta |u_k|^{\frac{n}{n-1}}} dx \to \infty$ as $k \to \infty$.
\end{thm}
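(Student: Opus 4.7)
My plan is to follow Moser's original strategy, which reduces the $n$-dimensional problem to a sharp one-dimensional estimate via symmetrization and a radial change of variables, and then establishes sharpness through an explicit extremizing sequence.

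\emph{Step 1 (Symmetrization).} First, replace $u$ by its spherically symmetric decreasing rearrangement $u^{*}$ on the ball $\Omega^{*}$ with $|\Omega^{*}| = |\Omega|$. The P\'olya--Szeg\H{o} inequality gives $\|\nabla u^{*}\|_{L^{n}} \leq \|\nabla u\|_{L^{n}}$, while equimeasurability preserves the integral on the left of (\ref{Trudinger-Moser Inequality}). Hence we may assume $\Omega = B_{R}\subset\mathbb{R}^{n}$ and $u$ is radial, non-increasing, with $u(R) = 0$.

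\emph{Step 2 (Reduction to a one-dimensional inequality).} Next, introduce the change of variable $r = R\,e^{-t/n}$, $t\in[0,\infty)$, and set $w(t) := \alpha_{n}^{(n-1)/n}\,u(r)$. Using $\alpha_{n}^{\,n-1} = n^{\,n-1}\omega_{n-1}$, a direct computation in polar coordinates yields
\begin{equation*}
\int_{\Omega}|\nabla u|^{n}\,dx = \int_{0}^{\infty}|w'(t)|^{n}\,dt, \qquad \frac{1}{|\Omega|}\int_{\Omega} e^{\alpha_{n}|u|^{n/(n-1)}}\,dx = \int_{0}^{\infty} e^{|w(t)|^{n/(n-1)} - t}\,dt.
\end{equation*}
Thus the theorem is equivalent to the statement that for every absolutely continuous $w\colon[0,\infty)\to\mathbb{R}$ with $w(0)=0$ and $\int_{0}^{\infty}|w'|^{n}\,dt\leq 1$, one has $\int_{0}^{\infty} e^{|w|^{n/(n-1)} - t}\,dt \leq C_{n}$.

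\emph{Step 3 (Moser's one-dimensional lemma --- the main obstacle).} This is where the sharp constant enters and is the delicate heart of the argument. H\"older's inequality gives the pointwise bound $|w(t)|^{n/(n-1)} \leq t$, so the integrand is bounded by $1$, but this crude estimate alone yields only an infinite integral. The key is to quantify the slack in H\"older: if $|w(t)|^{n/(n-1)}/t$ is close to $1$ at some $t$, then $|w'|^{n}$ must be nearly $1/t$ on $[0,t]$, exhausting most of the unit budget $\int|w'|^{n}\le 1$; consequently, for $s>t$ the quantity $|w(s)|^{n/(n-1)} - s$ is forced to be strictly negative with exponential decay rate. Making this precise via a level-set/distribution-function analysis of $A(t) := \int_{0}^{t}|w'|^{n}\,ds$ (equivalently, tracking the shortfall $1-A(t)$) produces a universal constant $C_{n}$. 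The exponent $\alpha_{n} = n\omega_{n-1}^{1/(n-1)}$ is precisely tuned so that the coefficient in front of $\int |w'|^{n}\,dt$ in Step 2 equals $1$; any strictly larger value would break this calibration and invalidate the 1D bound.

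\emph{Step 4 (Sharpness).} To show $\alpha_{n}$ is optimal, I construct the classical Moser sequence on $\Omega = B_{R}$:
\begin{equation*}
u_{k}(x) = \frac{1}{\omega_{n-1}^{1/n}} \begin{cases} (\log k)^{(n-1)/n}, & |x| \leq R/k, \\[4pt] \dfrac{\log(R/|x|)}{(\log k)^{1/n}}, & R/k < |x| \leq R,\end{cases}
\end{equation*}
extended by zero outside $\Omega$. Polar-coordinate computation gives $\|\nabla u_{k}\|_{L^{n}(\Omega)} = 1$, while on the inner ball $\{|x|\le R/k\}$ one has $|u_{k}|^{n/(n-1)} = n\log k/\alpha_{n}$. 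Therefore, for any $\beta > \alpha_{n}$,
\begin{equation*}
\int_{\Omega} e^{\beta |u_{k}|^{n/(n-1)}}\,dx \geq \frac{\omega_{n-1}R^{n}}{n}\,k^{n(\beta/\alpha_{n}-1)} \longrightarrow \infty \quad \text{as } k\to\infty,
\end{equation*}
which completes the sharpness assertion. The principal difficulty lies squarely in Step 3; the symmetrization and change of variables are routine, and the Moser sequence in Step 4 is explicit.
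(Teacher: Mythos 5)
The paper itself offers no proof of this theorem: it is quoted as background and attributed to Moser \cite{Mo}, so the only meaningful comparison is with Moser's original argument, which is exactly the route you outline. Your Steps 1, 2 and 4 are correct and complete in substance: the P\'olya--Szeg\H{o} reduction to a radial function on a ball is standard; the calibration in Step 2 checks out, since $\alpha_n^{\,n-1}=n^{\,n-1}\omega_{n-1}$ makes $\int_\Omega|\nabla u|^n\,dx=\int_0^\infty|w'|^n\,dt$ and $\alpha_n|u|^{n/(n-1)}=|w|^{n/(n-1)}$, and the change $r=Re^{-t/n}$ turns $\tfrac{1}{|\Omega|}r^{n-1}\,dr$ into $e^{-t}\,dt$ as you claim; and the Moser sequence computation ($\|\nabla u_k\|_{L^n}=1$, lower bound $\tfrac{\omega_{n-1}R^n}{n}k^{n(\beta/\alpha_n-1)}$) is right, with the only cosmetic fix that for a general bounded $\Omega$ you should support the sequence in a small ball $B_R(x_0)\subset\Omega$ rather than take $\Omega=B_R$.

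The genuine gap is Step 3, and you have correctly identified it as the heart of the matter but not actually closed it. The one-dimensional statement --- $w(0)=0$, $\int_0^\infty|w'|^n\,dt\le 1$ implies $\int_0^\infty e^{|w|^{n/(n-1)}-t}\,dt\le C_n$ --- \emph{is} Moser's theorem at the critical exponent; everything else in your outline only handles the reduction. Your heuristic (``if H\"older is nearly saturated at $t$, the budget $\int|w'|^n\le1$ is nearly spent, forcing exponential decay for $s>t$'') is the right intuition, but as written it cannot be checked: you never produce a quantitative statement such as a uniform bound on the measure of the level sets $\{t:\ |w(t)|^{n/(n-1)}\ge t-\lambda\}$, nor the comparison estimate $|w(s)|\le |w(t)|+\bigl(1-\textstyle\int_0^t|w'|^n\bigr)^{1/n}(s-t)^{(n-1)/n}$ together with the case analysis that converts it into a uniform constant $C_n$. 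Without that argument (Moser's Lemma~1, or one of its modern reworkings), the proof establishes the inequality only for exponents strictly below $\alpha_n$, since the crude bound $|w(t)|^{n/(n-1)}\le t$ gives an integrand bounded by $1$ but not an integrable majorant. So the proposal is a faithful and accurate outline of the standard proof, but the critical-constant estimate still needs to be supplied in full.
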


Similar to (\ref{Trudinger-Moser Inequality}), Moser in \cite{Mo} also proved an analogous inequality with the same best constant on 2-sphere. More precisely, he showed

\begin{thm}\label{Trudinger-Moser On 2-sphere} Let $d\mathcal{H}_2$ be the surface measure on two-dimensional sphere $\mathbb{S}^2$.
Then there exists a uniform constant $C>0$ such that
for any $u \in W^{1,2}(\mathbb{S}^2)$ with $||\nabla u||_{L^2(\mathbb{S}^2)} \leq 1$ and $\int_{\mathbb{S}^2} u d\mathcal{H}_2 = 0$,
the following holds:
  \begin{equation}\label{Trudinger-Moser Inequality On 2-sphere}
    \int_{\mathbb{S}^2} e^{4 \pi u^2} d \mathcal{H}_2\leq C,
  \end{equation}
  Moreover, (\ref{Trudinger-Moser Inequality On 2-sphere}) is sharp in the same sense that the above inequality fails to hold uniformly.
\end{thm}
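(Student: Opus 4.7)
My plan is to reduce the inequality (\ref{Trudinger-Moser Inequality On 2-sphere}) to a one-dimensional exponential estimate by combining rearrangement on $\mathbb{S}^2$ with the conformal invariance of the Dirichlet integral in dimension two.

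First, I would apply symmetric decreasing rearrangement on $\mathbb{S}^2$ relative to a fixed pole. Equimeasurability of $u$ and its rearrangement $u^{*}$ preserves both the mean-zero condition $\int_{\mathbb{S}^2}u^{*}\,d\mathcal{H}_2=0$ and the target integral $\int_{\mathbb{S}^2}e^{4\pi(u^{*})^{2}}\,d\mathcal{H}_2$, while the P\'olya--Szeg\H o principle on the sphere guarantees $\lVert\nabla u^{*}\rVert_{L^2(\mathbb{S}^2)}\leq 1$. It therefore suffices to prove the inequality for $u$ depending only on the geodesic distance from the chosen pole.

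Next, I would transfer the radial problem to $\mathbb{R}^2$ via stereographic projection $\pi:\mathbb{S}^2\setminus\{N\}\to\mathbb{R}^2$, setting $v(x)=u(\pi^{-1}(x))$. Conformal invariance of the Dirichlet integral in dimension two preserves $\lVert\nabla v\rVert_{L^2(\mathbb{R}^2)}\leq 1$, while the spherical area element becomes $\frac{4\,dx}{(1+|x|^2)^2}$. The target inequality thus becomes
\[
\int_{\mathbb{R}^2} e^{4\pi v^2}\,\frac{4\,dx}{(1+|x|^2)^2}\leq C,
\]
with $v$ radial and satisfying $\int_{\mathbb{R}^2} v(x)(1+|x|^2)^{-2}\,dx=0$. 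A further substitution $r=e^{t}$, $w(t)=v(e^{t})$, reduces this to a one-dimensional estimate: for absolutely continuous $w:\mathbb{R}\to\mathbb{R}$ with $\int_{\mathbb{R}}\dot w^{2}\,dt\leq 1$ and $\int_{\mathbb{R}} w\operatorname{sech}^{2} t\,dt=0$, show
\[
\int_{\mathbb{R}} e^{4\pi w(t)^{2}}\operatorname{sech}^{2} t\,dt\leq C.
\]

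The main technical obstacle is this one-dimensional estimate, in which the constant $4\pi$ must exactly match the decay $\operatorname{sech}^{2} t\sim 4e^{-2|t|}$ at infinity. I would handle it by adapting the level-set argument from Moser's radial lemma used in the proof of Theorem \ref{Trudinger-Moser}. By continuity and the weighted mean-zero constraint, there exists $t_0\in\mathbb{R}$ with $w(t_0)=0$, and then the Cauchy--Schwarz inequality yields the pointwise bound $w(t)^{2}\leq |t-t_0|$. The delicate step is to control the location of $t_0$ quantitatively using the weighted mean-zero condition, so that a dyadic decomposition of the super-level sets $\{w^{2}>\lambda\}$ produces weighted measure decaying like $e^{-2\lambda}$, precisely the borderline rate that makes $\int e^{4\pi w^{2}}\operatorname{sech}^{2} t\,dt$ uniformly bounded. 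The tight matching between $4\pi$ and the $\operatorname{sech}^{2}$ decay reflects the sharpness of $\alpha_2=4\pi$ already present in Theorem \ref{Trudinger-Moser}.
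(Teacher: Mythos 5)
The paper itself gives no proof of Theorem \ref{Trudinger-Moser On 2-sphere}: it is quoted as a known result of Moser \cite{Mo}. Your reduction steps (signed equimeasurable rearrangement about a pole, which preserves the mean-zero condition and the exponential integral, P\'olya--Szeg\H o for the gradient, then the conformal/logarithmic change of variables to a one-dimensional problem with weight $\operatorname{sech}^2 t$) are correct and are in fact exactly Moser's own strategy. The problem is that everything you have actually argued stops at the point where Moser's proof begins: the one-dimensional borderline estimate, which \emph{is} the theorem, is only asserted. Two concrete issues. First, a normalization slip: for a radial function, $\lVert\nabla u\rVert_{L^2(\mathbb{S}^2)}^2=2\pi\int_{\mathbb{R}}\dot w^2\,dt$, so the constraint is $\int\dot w^2\leq 1/(2\pi)$, not $\leq 1$; with the constraint as you wrote it, the claimed bound $w(t)^2\leq|t-t_0|$ together with the exponent $4\pi$ gives an integrand of size $e^{4\pi|t-t_0|}e^{-2|t|}$, which diverges violently, so the numbers in your sketch do not close. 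With the correct normalization one gets $4\pi w(t)^2\leq 2|t-t_0|$, and then $e^{4\pi w^2}\operatorname{sech}^2 t\lesssim e^{2|t-t_0|}e^{-2|t|}$, which is \emph{exactly} borderline and still non-integrable (the integrand is bounded below by a constant on a half-line). So the pointwise Cauchy--Schwarz bound alone can never give the conclusion; this is the same obstruction as in the half-line case of Theorem \ref{Trudinger-Moser}, where $w(t)\leq t^{1/2}$ only gives $e^{w^2-t}\leq 1$.

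Second, and more fundamentally, the step ``control the location of $t_0$ quantitatively using the weighted mean-zero condition'' is the entire difficulty, and mere existence of a zero of $w$ is genuinely insufficient: the zero can sit far out in the region where $\operatorname{sech}^2 t$ is exponentially small, and a function that is a tiny negative constant on the bulk (to satisfy $\int w\operatorname{sech}^2t\,dt=0$ at negligible cost) while climbing to large values in one tail shows that no bound of the form $w(t)^2\leq|t|+C$ with universal $C$ follows from the mean-zero condition alone; one must exploit how much Dirichlet energy is consumed before the function can become large relative to the weight's decay. Your claim that a dyadic decomposition of the super-level sets yields weighted measure decaying like $e^{-2\lambda}$ is precisely the Beurling-type distribution estimate (compare (\ref{Beurling's Estimate})) that has to be \emph{proved}, and at the sharp constant this requires the careful one-dimensional analysis that occupies the corresponding part of Moser's paper (it is also exactly the mechanism that distinguishes $4\pi$ from the improved constant $8\pi$ of Theorem \ref{Improved Trudinger-Moser On 2-sphere} under antipodal symmetry). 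As it stands, your argument reduces the theorem to an unproved statement of essentially equal depth, so the proof has a genuine gap.
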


In addition, (\ref{Trudinger-Moser Inequality On 2-sphere}) can be improved  if one only considers symmetric functions satisfying $u(\xi)=u(-\xi)$ for all $\xi\in \mathbb{S}^2$.

\begin{thm}\label{Improved Trudinger-Moser On 2-sphere}
  There exists a uniform constant $C$ such that

  \begin{equation*}
    \int_{\mathbb{S}^2} e^{8 \pi |u|^2} d \mathcal{H}_2< C
  \end{equation*}
  holds for those $u$ satisfying the same assumption as in Theorem \ref{Trudinger-Moser On 2-sphere}, and $u(\xi) = u(-\xi)$.

\end{thm}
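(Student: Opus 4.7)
The plan is to exploit the antipodal symmetry $u(\xi)=u(-\xi)$ by passing to the $\mathbb{Z}_2$-quotient $\mathbb{RP}^2:=\mathbb{S}^2/\{\pm 1\}$, which carries the induced metric of total area $2\pi$. Since the covering projection $\pi:\mathbb{S}^2\to\mathbb{RP}^2$ is a local isometry, every antipodally invariant $u$ descends to a well-defined $\tilde u$ on $\mathbb{RP}^2$ satisfying
$$\int_{\mathbb{RP}^2}\tilde u\,d\tilde{\mathcal H}_2=0,\qquad ||\nabla\tilde u||_{L^2(\mathbb{RP}^2)}^2=\tfrac12\,||\nabla u||_{L^2(\mathbb{S}^2)}^2\leq\tfrac12.$$
After the rescaling $v:=\sqrt{2}\,\tilde u$, which has unit Dirichlet energy and zero mean on $\mathbb{RP}^2$, one finds
$$\int_{\mathbb{S}^2}e^{8\pi u^2}\,d\mathcal H_2=2\int_{\mathbb{RP}^2}e^{8\pi\tilde u^2}\,d\tilde{\mathcal H}_2=2\int_{\mathbb{RP}^2}e^{4\pi v^2}\,d\tilde{\mathcal H}_2.$$
Thus the theorem reduces to a Moser--Trudinger inequality with exponential constant $4\pi$ on the closed two-manifold $\mathbb{RP}^2$.

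The second step is to prove this reduced inequality. Since the sharp Moser--Trudinger constant on a closed surface is a local invariant that depends only on the dimension, one expects $4\pi$ to work, matching the sharp constant on $\mathbb{S}^2$ in \thmref{Trudinger-Moser On 2-sphere}. Following Fontana's extension of Moser's theorem to closed Riemannian manifolds, I would cover $\mathbb{RP}^2$ by finitely many geodesic balls that are quasi-isometric to Euclidean discs, pick a smooth subordinate partition of unity $\sum_i\chi_i=1$, decompose $v=\sum_i\chi_iv$, and estimate each term $\int e^{4\pi(\chi_iv)^2}$ via the classical Euclidean Moser inequality applied to the compactly supported factor. The Poincar\'e inequality afforded by the mean-zero hypothesis on $v$ absorbs the lower-order terms produced by the cutoffs.

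The main obstacle is preserving the sharp constant $4\pi$ during localization, because naive cutoffs typically cost lower-order contributions that erode the exponent; the standard remedy is Adams' capacity trick, or equivalently an overlapping system of isothermal charts on which the conformal factor is kept close to $1$. An alternative, self-contained route that bypasses $\mathbb{RP}^2$ altogether is to perform an antipodal rearrangement directly on $\mathbb{S}^2$: replace $u$ by the equimeasurable $u^\ast(\xi)=f(|\cos\theta|)$ whose level sets $\{u^\ast>t\}$ are pairs of antipodal spherical caps about the poles, with $f$ decreasing on $[0,1]$. The P\'olya--Szeg\H{o} inequality yields $||\nabla u^\ast||_{L^2(\mathbb{S}^2)}\leq ||\nabla u||_{L^2(\mathbb{S}^2)}$, after which Moser's change of variable $s=\cos\theta\mapsto t=\log\tfrac{1+s}{1-s}$ reduces the problem to a weighted one-dimensional Moser lemma; because $f$ is even in $s$, only the range $s\in[0,1]$ contributes to the Dirichlet integral, which is precisely the mechanism that produces the factor two in the sharp exponential constant.
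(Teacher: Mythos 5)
The paper itself offers no proof of this statement: Theorem \ref{Improved Trudinger-Moser On 2-sphere} is quoted as background from Moser \cite{Mo}, so your proposal has to be judged on its own merits. Your first step is correct and is indeed the natural way to see the result: an even $u$ descends to $\tilde u$ on $\mathbb{RP}^2$ with both the Dirichlet energy and the integral halved, and after setting $v=\sqrt2\,\tilde u$ the claim becomes exactly the Moser--Trudinger inequality with the sharp planar constant $4\pi$ for mean-zero functions on the compact surface $\mathbb{RP}^2$. If at that point you invoke the known sharp inequality on closed surfaces (Cherrier, or Fontana's general theorem), the proof is complete. The difficulty is that you then try to prove the reduced inequality yourself, and both routes you sketch have genuine gaps precisely at the sharp constant.

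For the localization route: with $\nabla(\chi_i v)=\chi_i\nabla v+v\nabla\chi_i$ you only get $\|\nabla(\chi_i v)\|_{L^2}\le \|\nabla v\|_{L^2}+C\|v\|_{L^2}$, and the Poincar\'e inequality makes $\|v\|_{L^2}$ \emph{bounded}, not small; so the Euclidean Moser inequality applied to $\chi_i v$ yields an exponent strictly below $4\pi$, and no choice of partition of unity ``absorbs'' this loss. (Fontana's actual argument avoids cutoffs of the function: it represents $v$ through the Green function/parametrix and runs Adams' potential-theoretic scheme with O'Neil's lemma.) For the symmetrization route: the standard P\'olya--Szeg\H{o} inequality on $\mathbb{S}^2$ concerns the \emph{single} polar-cap rearrangement, which destroys evenness and does not produce your two-cap profile; for the two-cap rearrangement one would need that pairs of antipodal caps minimize perimeter among antipodally symmetric sets of the same measure, and this is \emph{false} once the measure exceeds $2\pi$: an antipodally symmetric band about a great circle (in the quotient, the complement of a geodesic disc) of total area $A>2\pi$ has strictly smaller perimeter than the two caps of area $A$, so the coarea/isoperimetry mechanism behind P\'olya--Szeg\H{o} breaks down and the claimed inequality $\|\nabla u^\ast\|_{L^2}\le\|\nabla u\|_{L^2}$ is unjustified in general. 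A correct symmetrization proof must split off the large-measure levels (they contribute only subcritically, using the zero-mean normalization) and use the small-area isoperimetric profile of $\mathbb{RP}^2$, which is the real source of the doubled constant; also, as a minor point, your profile $f$ should be increasing in $|\cos\theta|$ if the superlevel sets are to be caps about the poles.
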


\medskip

A very interesting problem in complex analysis related to Theorem \ref{Trudinger-Moser}, Theorem \ref{Trudinger-Moser On 2-sphere} and Theorem \ref{Improved Trudinger-Moser On 2-sphere} was proposed in \cite{Mo}. To illustrate this problem, we let $f(z)$  be any analytic function defined in complex unit disc $\{ z\in \mathbb{C}: \ |z| < 1 \}$ and satisfy
\begin{equation}\label{Assumption}
  \int \int_{|z| < 1} |f'(z)|^2 dx dy \leq \pi ; \ \ \ \ f(0) = 0.
\end{equation}

The question is: does there exist uniform constants $\alpha > 0$ and $C$ such that the following holds
\begin{equation}\label{Beurling's Question}
  \int_{\mathbb{S}^1} e^{\alpha |f(z)|^2} dz \leq C
\end{equation}
for all $f$ satisfying (\ref{Assumption})?

It was first pointed out by A. Beurling \cite{Be} that for such a function satisfying (\ref{Assumption}), we have an upper bound for its level set, i.e.

\begin{equation}\label{Beurling's Estimate}
  \| \{ \theta: |f(e^{i \theta})| \geq s \} \| \leq e^{-s^2 + 1}
\end{equation}

This estimate instantly implies (\ref{Beurling's Question}) holds for $\alpha < 1$ by rewriting the integral in (\ref{Beurling's Question}) in terms of the distributional function. Moreover, he proved that the following so-called Beurling's function

\begin{equation}\label{Beurling's Function}
  B_a (e^{i \theta}) = \( \log \frac{1}{1-a e^{i \theta}} \) / \sqrt{\log \frac{1}{1 - a^2}}
\end{equation}
fails (\ref{Beurling's Question}) when $\alpha > 1$.

\medskip
The critical case $\alpha=1$ was a very difficult  open question until
  S.-Y. A. Chang and D. E. Marshall \cite{CM} proved the following celebrated theorem.

\begin{thm}\label{Chang-Marshall}
  For those $f(z)$ satisfying assumption (\ref{Assumption}), there exists a constant $C > 0$ such that
  \begin{equation}\label{Chang-Marshall Inequality}
    \sup_{|z| < 1} \int_0^{2 \pi} e^{ \alpha |f(e^{i \theta}) - f(z)|^2} P_z (\theta) \frac{d \theta}{\pi} \leq C
  \end{equation}
  where $P_z(\theta)$ is the Poisson kernel, $0 \leq \alpha \leq 1$. Moreover, (\ref{Chang-Marshall Inequality}) is sharp in the sense that for $\alpha > 1$, the integral will be made arbirtarily large with Beurling's function \eqref{Beurling's Function}.
\end{thm}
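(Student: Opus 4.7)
The plan is to exploit the Möbius invariance of the Dirichlet integral to reduce \eqref{Chang-Marshall Inequality} to a fixed-point inequality at $z=0$, and then handle the subcritical and critical ranges separately, with the critical exponent $\alpha=1$ constituting the main obstacle.

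\textbf{Möbius reduction.} For each $z_0$ with $|z_0|<1$, let $\phi_{z_0}(w)=(w+z_0)/(1+\overline{z_0}w)$ be the disk automorphism sending $0$ to $z_0$, and set $g := f\circ\phi_{z_0} - f(z_0)$. Then $g$ is holomorphic on $\{|w|<1\}$ with $g(0)=0$; the conformal invariance of the Dirichlet integral gives $\iint_{|w|<1}|g'(w)|^2\,du\,dv \leq \pi$, and a direct computation shows that $\phi_{z_0}$ pushes the normalized measure $d\theta/(2\pi)$ on the unit circle forward to $P_{z_0}(\theta)\,d\theta/(2\pi)$. Thus it suffices to prove
$$\int_0^{2\pi} e^{\alpha|g(e^{i\theta})|^2}\,\frac{d\theta}{2\pi}\leq C$$
uniformly over all holomorphic $g$ on the disk with $g(0)=0$ and $\iint|g'|^2\leq\pi$.

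\textbf{Subcritical range.} Writing the exponential integral via the distribution function and invoking Beurling's estimate \eqref{Beurling's Estimate},
$$\int_0^{2\pi} e^{\alpha|g|^2}\,\frac{d\theta}{2\pi} \;=\; 1+2\alpha\int_0^\infty s\,e^{\alpha s^2}\,\frac{|\{|g|\geq s\}|}{2\pi}\,ds \;\leq\; 1+\frac{\alpha e}{1-\alpha},$$
which immediately settles the case $0\leq\alpha<1$ but diverges at the endpoint.

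\textbf{Critical endpoint $\alpha=1$, the main obstacle.} This is the heart of the theorem, because the naive bound above fails precisely where it is most interesting. My strategy is a tail-truncation combined with a sharp analysis for polynomials. Expand $g(z)=\sum_{k\geq 1}a_k z^k$ so that $\sum k|a_k|^2 \leq 1$, and split $g = P_N + R_N$ where $P_N(z)=\sum_{k\leq N}a_k z^k$ is the low-frequency part. Given $\varepsilon\in(0,1)$ choose $N=N(\varepsilon)$ so that the tail satisfies $\iint|R_N'|^2 \leq \varepsilon\pi$; the subcritical estimate of the previous step, applied to the rescaled function $R_N/\sqrt{\varepsilon}$, absorbs the high-frequency contribution uniformly with a constant $C(\varepsilon)$. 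The pointwise bound $|g|^2 \leq (1+\delta)|P_N|^2+(1+\tfrac{1}{\delta})|R_N|^2$ followed by Hölder reduces matters to a uniform estimate for polynomials of arbitrary degree with normalized Dirichlet energy. This last step is the genuinely delicate one: I would attack it via symmetric decreasing rearrangement of the boundary trace on the circle together with a reduction to the one-parameter extremal family \eqref{Beurling's Function}, which are the sharpness examples of \eqref{Beurling's Estimate} and for which the integral in \eqref{Chang-Marshall Inequality} at $\alpha=1$ admits an explicit uniformly-bounded evaluation. The careful interplay of tail truncation, rearrangement, and the explicit extremal computation is where the essential difficulty lies, since any loss in any single step would degrade $\alpha$ below $1$.
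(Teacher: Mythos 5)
This theorem is not proved in the paper at all: it is quoted from Chang--Marshall \cite{CM}, so there is no in-paper argument to compare against, and your proposal must stand on its own as a proof of the full result. The first two steps do stand: the M\"obius reduction (conformal invariance of the Dirichlet integral plus the fact that the boundary Jacobian of $\phi_{z_0}$ is the Poisson kernel) correctly reduces the supremum over $z$ to the case $z=0$, and the layer-cake computation with Beurling's estimate \eqref{Beurling's Estimate} settles $0\leq\alpha<1$. But the critical case $\alpha=1$ is the entire content of the theorem, and there the proposal has a genuine gap rather than a proof.

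Concretely, the truncation scheme reduces the problem to a statement that is false. Since the splitting $g=P_N+R_N$ is orthogonal in the Dirichlet norm, $P_N$ may carry essentially all of the energy, i.e. $\iint|P_N'|^2$ can be arbitrarily close to $\pi$; and the pointwise bound $|g|^2\leq(1+\delta)|P_N|^2+(1+\tfrac1\delta)|R_N|^2$ followed by H\"older with exponents $p,p'$ (you cannot take $p=1$, since $e^{(1+1/\delta)|R_N|^2}$ is not in $L^\infty$) forces you to bound $\int_0^{2\pi}e^{\gamma|P_N|^2}d\theta$ with $\gamma=p(1+\delta)>1$, uniformly over polynomials with Dirichlet integral up to $\pi$. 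No such supercritical uniform bound exists: Taylor partial sums of the Beurling functions $B_a$ (normalized) violate it as $a\to1$ and the degree grows --- this is exactly the sharpness half of the theorem. Note also that $N$ necessarily depends on $g$, not only on $\varepsilon$, so ``polynomials of arbitrary degree with normalized energy'' is not a smaller class in any useful sense; even at exponent exactly $1$ the reduction would be circular, since polynomials are dense in the Dirichlet space and the exponential functional passes to the limit. Finally, the closing step --- symmetric decreasing rearrangement of the boundary trace plus ``reduction to the one-parameter extremal family'' --- is only gestured at: rearranging $|g|$ does decrease the relevant $H^{1/2}$-type energy, but symmetric decreasing traces of bounded energy still form an infinite-dimensional class, and showing that nothing in it beats the Beurling family at the endpoint is precisely the difficulty that Chang and Marshall's refined level-set analysis (sharpening Beurling's bound where the energy near the top levels is small) was invented to handle. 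As written, the endpoint case --- the theorem itself --- remains unproved.
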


The following corollary is a real-variable version of Theorem \ref{Chang-Marshall}. For $f(z)$ defined in the unit disc $\{ z\in \mathbb{R}^2 : \ |z| \leq 1 \}$,$f(0) = 0$ and belongs to $L^1(\mathbb{S}^1)$, we have

\begin{cor}\label{Real Chang-Marshall}
  There exists $C > 0$ such that if $f$ is real-valued with $\int_0^{2 \pi} f(e^{i \theta}) d \theta = 0$ and $||\nabla f||_{L^2} \leq \pi^{1/2}$, then

  \begin{equation}\label{Real Chang-Marshall Inequality}
    \sup_{|z|< 1} \int_0^{2\pi} e^{\alpha |f(e^{i \theta}) - f(z)|^2} P_z(\theta) \frac{d\theta}{2 \pi} \leq C
  \end{equation}

  holds for $0 \leq \alpha \leq 1$. Moreover, (\ref{Real Chang-Marshall Inequality}) is sharp in the sense that the integral in (\ref{Real Chang-Marshall Inequality}) will be made arbitrarily large for $\alpha > 1$ with the function $Re B_a(e^{i \theta})$.
\end{cor}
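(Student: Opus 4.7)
The plan is to reduce \corref{Real Chang-Marshall} directly to the complex-variable \thmref{Chang-Marshall} via the classical harmonic-conjugate construction. Since the holomorphic inequality already encodes the sharp exponent $\alpha=1$, I only need to realize the real-valued $f$ as $\text{Re}\,g$ for some holomorphic $g$ satisfying \eqref{Assumption} and then exploit $|\text{Re}\,w|\leq|w|$ to transfer the bound.

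First I would replace $f$ in the interior of the disc by the Poisson extension $u$ of its boundary trace; the hypothesis $\int_0^{2\pi}f(e^{i\theta})\,d\theta=0$ together with the mean value property gives $u(0)=0$, while Dirichlet's principle yields
$$\iint_{|z|<1}|\nabla u|^2\,dxdy\;\leq\;\iint_{|z|<1}|\nabla f|^2\,dxdy\;\leq\;\pi.$$
Let $v$ be the harmonic conjugate of $u$ normalized by $v(0)=0$ and set $g=u+iv$; the Cauchy--Riemann relations give $|g'(z)|^2=|\nabla u(z)|^2$, so $g$ is holomorphic on the disc with $g(0)=0$ and $\iint_{|z|<1}|g'|^2\leq\pi$, i.e.\ $g$ satisfies the Chang--Marshall hypothesis \eqref{Assumption}. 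Interpreting $f(z)$ for $|z|<1$ as $u(z)=\text{Re}\,g(z)$, one has
$$|f(e^{i\theta})-f(z)|^2=|\text{Re}(g(e^{i\theta})-g(z))|^2\;\leq\;|g(e^{i\theta})-g(z)|^2,$$
and inserting this into the exponential and applying \thmref{Chang-Marshall} to $g$ with $\alpha\in[0,1]$ gives, uniformly in $|z|<1$,
$$\int_0^{2\pi}e^{\alpha|f(e^{i\theta})-f(z)|^2}P_z(\theta)\frac{d\theta}{2\pi}\;\leq\;\frac{1}{2}\int_0^{2\pi}e^{\alpha|g(e^{i\theta})-g(z)|^2}P_z(\theta)\frac{d\theta}{\pi}\;\leq\;\frac{C}{2},$$
which is \eqref{Real Chang-Marshall Inequality}.

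For the sharpness at $\alpha>1$ I would test with $f=\text{Re}\,B_a$: since $B_a(0)=0$, this $f$ is real-valued with $f(0)=0$ and zero boundary mean, and the same Cauchy--Riemann identity gives $\iint|\nabla f|^2=\iint|B_a'|^2\leq\pi$. A direct estimate using $|1-ae^{i\theta}|^2\asymp(1-a)^2+\theta^2$ as $a\to 1^-$ shows $|\text{Re}\,B_a(e^{i\theta})|^2\gtrsim\log\tfrac{1}{1-a}$ on the arc $|\theta|\lesssim 1-a$, so taking $z=0$ in \eqref{Real Chang-Marshall Inequality} produces
$$\int_0^{2\pi}e^{\alpha|\text{Re}\,B_a(e^{i\theta})|^2}\frac{d\theta}{2\pi}\;\gtrsim\;(1-a)^{1-\alpha}\;\longrightarrow\;\infty\quad(a\to 1^-)$$
for every $\alpha>1$. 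The reduction itself is essentially routine; the only genuinely non-automatic point is this last estimate, verifying that $|\text{Re}\,B_a|$ retains the full logarithmic concentration of $|B_a|$ near $\theta=0$ so that sharpness passes from the holomorphic case to the real one without loss.
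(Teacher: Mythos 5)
Your argument is correct, but note that the paper itself gives no proof of \corref{Real Chang-Marshall}: it is stated as a known real-variable consequence of \thmref{Chang-Marshall} taken from \cite{CM}, so there is no in-paper argument to compare against. Your reduction---Poisson-extend the boundary trace so that Dirichlet's principle keeps the energy at most $\pi$ and the zero boundary mean gives $u(0)=0$, form $g=u+iv$ with $v(0)=0$ and $|g'|^2=|\nabla u|^2$, then use $|\mathrm{Re}\,w|\leq |w|$ inside the exponential before invoking \thmref{Chang-Marshall} (your factor bookkeeping between $d\theta/\pi$ and $d\theta/2\pi$ is right)---is exactly the standard derivation that the citation relies on, and your sharpness test with $\mathrm{Re}\,B_a$ is the one the statement itself prescribes. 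Two small points. First, your step ``interpreting $f(z)$ for $|z|<1$ as $u(z)$'' is not an optional convenience but a necessary reading of the statement: for a general Sobolev $f$ the interior point values are not controlled by the Dirichlet norm (a logarithmic spike at an interior point $z_0$ makes $f(z_0)$ arbitrarily large at negligible energy cost while leaving the boundary data untouched, and then the supremum in \eqref{Real Chang-Marshall Inequality} blows up), so the corollary is really about $f$ harmonic in the disc, equivalently about the Poisson integral of the boundary values; it would be cleaner to say this is the hypothesis rather than an interpretation. Second, in the sharpness estimate the lower bound on the arc $|\theta|\lesssim 1-a$ is $|\mathrm{Re}\,B_a(e^{i\theta})|^2\geq (1-o(1))\log\frac{1}{1-a}$, so the resulting quantity is of order $(1-a)^{1-\alpha(1-o(1))}$ rather than $(1-a)^{1-\alpha}$; this changes nothing, since for fixed $\alpha>1$ the exponent is still negative once $a$ is close enough to $1$, and the integral at $z=0$ (hence the supremum) still tends to infinity.
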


\medskip

Chang-Marshall's result is only known for unit disc in dimension two. S. Y. Chang raised the following question to G. Lu in 2015: can we get something similar for Sobolev functions in higher dimensional balls or even more general domains in $\mathbb{R}^n$ for $n\geq 3$?
This is the main purpose of this article.
 Notice that the Chang-Marshall inequality in Theorem \ref{Chang-Marshall} and Corollary \ref{Real Chang-Marshall} can be viewed as a Trudinger-Moser type trace inequality for Sobolev functions (harmonic functions)  with mean value zero on disks in dimension 2. Compared with Theorem \ref{Trudinger-Moser On 2-sphere} for Moser's inequality on two dimensional sphere $\mathbb{S}^2$, what we are looking for is actually a more general Trudinger-Moser trace inequality for Sobolev functions.

\medskip

\medskip

The main purpose of the present paper is to establish a Trudinger-Moser type trace inequality as well as giving a upper bound of the best constant. Roughly speaking, we want to prove that for any $u \in C^1(\overline{\Omega}) \cap C(\p \Omega)$ with $||\nabla u||_{L^n(\Omega)} \leq 1$ and $\int_{\Omega} u dx= 0$, where $\Omega$ is a bounded domain with $C^{1}$ boundary, there exist constants $\alpha, C$ such that

\begin{equation}\label{Trace inequality}
  \int_{\p \Omega} e^{\alpha |u|^{\frac{n}{n-1}}} d \mathcal{H}_{n-1} \leq C.
\end{equation}

Our initial attempt was to try to establish the above trace inequality by following Adams' approach \cite{Ad}.
In other words, if we can represent the function $u$ on the boundary $\partial \Omega$ in terms of some integrals of the functions $u$ and $\nabla u$ on $\Omega$, then we may carry out the argument of Adams using O'Neil's lemma to accomplish the proof of such an inequality.
  However, the integral (\ref{Trace inequality}) and the Dirichlet norm are taken on metric spaces with different Hausdorff dimension, the classical method of Adams does not seem  to be applicable here. To overcome this difficulty, we develop a new approach in the present paper. The idea is that we are going to look for a so-called trace function, say $w$, satisfying certain properties so that we can convert the integral over the boundary $\partial \Omega$ of the domain $\Omega$ in (\ref{Trace inequality}) to an integral in the interior of $\Omega$.  Our proof is inspired by the work on $L^p$ Sobolev trace inequalities of G. Auchmuty, see \cite{Au}. On the other hand, we get an upper bound of $\alpha$, by constructing a counter sequence $\{ u_k \}$. This sequence behaves quite similar as the one in the proof of the sharpness of
  (\ref{Trudinger-Moser Inequality}). See \cite{Mo} for more details of such kind of sequences.

Our paper is organized as following. In Section 2, we will state our main results. In Section 3, we will give the proof of the existence of the trace function. In Section 4, we prove the Chang-Marshall inequality and we will give the proof of the upper bound of best constant in Section 5.

\medskip

{\bf Acknowledgement.} The authors wish to thank  S. Y. Alice Chang for suggesting this problem to us and for her encouragement, and for her comments of its exposition on the earlier draft of the paper. The second author also wish to thank G. Auchmuty for bringing his work \cite{Au}
to our attention.

\section{Statement of Main Results}

Our first result is a generalized version of Theorem \ref{Chang-Marshall} for arbitary n-dimensional bounded domain $\Omega$ with $C^1$ boundary for all $n\geq 2$.

\begin{thm}\label{Main Result 1}
  In a bounded open domain $\Omega \subset \mathbb{R}^n$ with $C^{1,\alpha}$ boundary, for any $\alpha < \beta_n = (n-1)(\omega_{n-1}/2)^{\frac{1}{n-1}}$ and for any $u \in C^1(\overline{\Omega}) \cap C( \partial \Omega)$ with $\int_{\Omega} u dx = 0$ and $||\nabla u||_{L^n(\Omega)} \leq 1$, there exists a constant $C = C(n,\alpha,\Omega)$ such that

  \begin{equation}\label{Trudinger-Moser Trace Inequality}
    \int_{\p \Omega} e^{\alpha |u|^{\frac{n}{n-1}}} d\mathcal{H}_{n-1} \leq C.
  \end{equation}

\end{thm}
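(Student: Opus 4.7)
The plan is to derive an explicit integral representation of the trace $u|_{\partial\Omega}$ in terms of $\nabla u$ on the interior of $\Omega$, thereby converting the Hausdorff-dimension mismatch that obstructs a direct application of Adams' method into a purely interior kernel bound, and then to run the Adams--O'Neil rearrangement machinery on the resulting Riesz-type potential. Concretely, for each fixed $y\in\partial\Omega$ I would construct a \emph{trace function} $w(\cdot,y)$ solving the Neumann problem
$$\Delta_x w(\cdot,y) = 0 \text{ in } \Omega,\qquad \partial_{\nu_x} w(\cdot,y) = \delta_y - \frac{1}{\mathcal{H}_{n-1}(\partial\Omega)} \text{ on } \partial\Omega,$$
whose compatibility condition is automatic. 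Green's identity then yields the representation
$$u(y) = \int_\Omega \nabla u(x)\cdot \nabla_x w(x,y)\, dx + \frac{1}{\mathcal{H}_{n-1}(\partial\Omega)}\int_{\partial\Omega} u\, d\mathcal{H}_{n-1},$$
so that the boundary trace is a linear functional of $\nabla u$ modulo a single scalar $\bar u_{\partial\Omega}$, in the spirit of Auchmuty's framework for $L^p$ trace inequalities.

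The decisive analytic input is the pointwise kernel bound
$$|\nabla_x w(x,y)| \le C|x-y|^{1-n}, \qquad x\in\Omega,\ y\in\partial\Omega,$$
with $C = C(\Omega,n)$. In a flattening chart near $y$ the model is a half-space with a point source at a boundary point; the method of images produces the explicit Neumann kernel $\tfrac{2}{(n-2)\omega_{n-1}|x-y|^{n-2}}$ for $n\ge 3$ (with a logarithmic analogue when $n=2$), whose gradient has the claimed decay. The $C^{1,\alpha}$ regularity of $\partial\Omega$, combined with standard Schauder estimates, would then upgrade the half-space computation to a uniform global bound through a cutoff and perturbation argument, while away from $y$ the harmonicity of $w(\cdot,y)$ gives smooth control via interior elliptic regularity.

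Inserting the kernel bound into the representation yields
$$|u(y) - \bar u_{\partial\Omega}| \le C\int_\Omega \frac{|\nabla u(x)|}{|x-y|^{n-1}}\, dx, \qquad y\in\partial\Omega,$$
exhibiting $u|_{\partial\Omega}$ as a Riesz-$1$ potential of $|\nabla u|\in L^n(\Omega)$. The geometric input for obtaining the sharp constant is that for $y\in\partial\Omega$ the volume $|\Omega\cap B(y,r)|$ is asymptotically $\tfrac{\omega_{n-1}}{2n}r^n$ as $r\to 0^+$, since $\partial\Omega$ is locally flat; the decreasing rearrangement of the kernel therefore satisfies $K_y^*(t) \le (1+o(1))\bigl(\tfrac{\omega_{n-1}}{2nt}\bigr)^{(n-1)/n}$. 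Feeding this into O'Neil's lemma and running the standard Adams--Moser computation produces
$$\int_{\partial\Omega} e^{\alpha|u(y)-\bar u_{\partial\Omega}|^{n/(n-1)}}\, d\mathcal{H}_{n-1} \le C$$
for every $\alpha < \beta_n = (n-1)(\omega_{n-1}/2)^{1/(n-1)}$, with the factor $\omega_{n-1}/2$ tracing back directly to the half-ball constant. Finally, $\int_\Omega u\, dx = 0$ together with the Poincar\'e and trace inequalities forces $|\bar u_{\partial\Omega}| \le C||u||_{W^{1,n}(\Omega)} \le C||\nabla u||_{L^n(\Omega)} \le C$, so this bounded shift is absorbed into the constant.

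The step I expect to be the main obstacle is the uniform kernel bound for $\nabla_x w$: while the half-space calculation is classical, globalizing it on a general $C^{1,\alpha}$ domain and arranging that the constant produced by the O'Neil--Adams rearrangement matches precisely the asserted $\beta_n$ requires a careful construction and perturbation analysis of the Neumann function with a boundary source -- essentially the content of Section~3 of the paper. A secondary subtlety is that O'Neil's lemma is most naturally stated for convolution operators on $\mathbb{R}^n$ of matching dimension, so one must verify that applying it with the restricted kernel $|x-y|^{1-n}\mathbf{1}_\Omega(x)$ and with integration taken over the $(n-1)$-dimensional set $\partial\Omega$ still recovers the sharp exponential threshold.
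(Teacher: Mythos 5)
Your plan is the Adams/potential-theoretic route that the paper explicitly attempted and abandoned, and the step you defer as a ``secondary subtlety'' is in fact a genuine gap, not a verification. O'Neil's inequality and the standard Adams--Garsia lemma control the rearrangement of the potential $Tf$ with respect to the \emph{same} measure used to rearrange the kernel in the $x$-variable (Lebesgue measure on $\Omega$); here the exponential is integrated against $\mathcal{H}_{n-1}$ on $\partial\Omega$, and a bound on the Lebesgue rearrangement of (an extension of) $Tf$ gives no control of its distribution with respect to the $(n-1)$-dimensional boundary measure. Moreover, the constant cannot come out as claimed from the $x$-rearrangement alone: with the correct kernel amplitude $\frac{2}{\omega_{n-1}}|x-y|^{1-n}$ (your displayed rearrangement bound $K_y^*(t)\leq (1+o(1))(\omega_{n-1}/(2nt))^{(n-1)/n}$ silently drops this factor), the one-measure Adams computation over half-balls produces the \emph{interior} threshold $n(\omega_{n-1}/2)^{1/(n-1)}$, which is strictly larger than $\beta_n=(n-1)(\omega_{n-1}/2)^{1/(n-1)}$ --- and boundedness of the boundary integral up to that larger value is false: the Moser-type sequence concentrating at a boundary point (the paper's Theorem 2.2) makes $\int_{\partial\Omega}e^{\alpha|u|^{n/(n-1)}}d\mathcal{H}_{n-1}$ blow up for every $\alpha>\beta_n$, and for that radial sequence your representation $u(y)-\bar u_{\partial\Omega}=\int_\Omega\nabla u\cdot\nabla_x w(x,y)\,dx$ is essentially saturated. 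So the factor $n-1$ in $\beta_n$ can only arise from a genuinely two-measure argument in which $\mathcal{H}_{n-1}(\partial\Omega\cap B_r(y))\sim c\,r^{n-1}$ enters the exponential bookkeeping; supplying such a trace-adapted Adams lemma (and the uniform kernel asymptotics with the precise amplitude on a $C^{1,\alpha}$ domain) is the heart of the matter and is missing from your sketch. This dimension mismatch is exactly why the authors state that Adams' classical method ``does not seem to be applicable.''

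For contrast, the paper avoids kernels altogether. It constructs a single auxiliary trace function $w$, the minimizer over mean-zero $W^{1,p}(\Omega)$, $1<p<n$, of $E(v)=\int_\Omega|\nabla v|^p\,dx-p\int_{\partial\Omega}v\,d\mathcal{H}_{n-1}$ (in the spirit of Auchmuty), which satisfies $\triangle_p w=|\partial\Omega|/|\Omega|$ in $\Omega$ and $|\nabla w|^{p-2}\nabla w\cdot\overrightarrow{n}=1$ on $\partial\Omega$. The identity $\int_{\partial\Omega}e^{\alpha|u|^{n/(n-1)}}d\mathcal{H}_{n-1}=\int_\Omega \mathrm{div}\bigl(e^{\alpha|u|^{n/(n-1)}}|\nabla w|^{p-2}\nabla w\bigr)dx$ converts the trace integral into interior integrals, which are then estimated by H\"older's inequality together with Cianchi's Moser--Trudinger inequality without boundary conditions (critical constant $n(\omega_{n-1}/2)^{1/(n-1)}$); letting $p\to 1$, so that $p^*\to n/(n-1)$, yields every $\alpha<\beta_n$, the loss from $n$ to $n-1$ coming from the H\"older exponent rather than from boundary-measure asymptotics. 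Your Neumann-kernel representation could conceivably be pushed through with a two-measure Fontana--Morpurgo-type machinery or a symmetrization argument, but as written the decisive sharp-constant step is asserted rather than proved.
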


Our second result shows the sharpness of the constant $\beta_n = (n-1)(\omega_{n-1}/2)^{\frac{1}{n-1}}$. More precisely,

\begin{thm}\label{Main Result 2}
  Inequality (\ref{Trudinger-Moser Trace Inequality}) is sharp in the sense that for any $\alpha > \beta_n$, there exists a sequence $\{ u_k \} \subset C^1(\overline{\Omega}) \cap C(\p \Omega)$ satisfying $\int_{ \Omega} u_k = 0$ and $||\nabla u_k||_{L^n} \leq 1$ such that

  \begin{equation}\label{Integral Blows Up}
    \int_{\p \Omega} e^{\alpha |u_k|^{\frac{n}{n-1}}} d\mathcal{H}_{n-1} \to \infty \ \ \ \ as \ k \to \infty.
  \end{equation}
\end{thm}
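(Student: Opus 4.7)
\textbf{Proof proposal for Theorem \ref{Main Result 2}.}

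The plan is to construct $\{u_k\}$ as a Moser-type concentrating sequence centered at a boundary point of $\Omega$; this is the natural higher-dimensional analogue of Beurling's function on the unit disk. Fix $x_0 \in \partial \Omega$. By the $C^1$-regularity of $\partial\Omega$ we may assume, after a rigid motion, that $x_0 = 0$ and the inward unit normal is $e_n$, so that for all sufficiently small $r>0$ the set $B(0,r)\cap \Omega$ is a $C^1$-perturbation of the half-ball $B^+(0,r)=B(0,r)\cap\{x_n>0\}$ and $\partial\Omega\cap B(0,r)$ is a $C^1$-perturbation of the equatorial disk $B(0,r)\cap\{x_n=0\}$.

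For each $k\ge 2$ and fixed small $r$ to be chosen, define
\[
\phi_k(x) = c\begin{cases} (\log k)^{(n-1)/n}, & |x|\le r/k,\\ (\log k)^{-1/n}\log(r/|x|), & r/k\le |x|\le r,\\ 0, & |x|\ge r, \end{cases}
\]
with $c = (2/\omega_{n-1})^{1/n}$. A direct polar-coordinate computation on the half-space gives $\|\nabla\phi_k\|_{L^n(\mathbb{R}^n_+)}^n = c^n\omega_{n-1}/2 = 1$, and by $C^1$-regularity of $\partial\Omega$ one has $\|\nabla\phi_k\|_{L^n(\Omega)}^n = 1+\eta(r)$ with $\eta(r)\to 0$ as $r\to 0$. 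Replacing $c$ by $c(1+\eta(r))^{-1/n}$ and smoothing the Lipschitz corners at $|x|=r/k$ and $|x|=r$ by a standard mollification (which perturbs all integrals only by a factor $1+o_k(1)$) yields a $C^1(\overline\Omega)$ function, still denoted $\phi_k$, with $\|\nabla\phi_k\|_{L^n(\Omega)}\le 1$ and $\phi_k(x)\ge c(1-o_k(1))(\log k)^{(n-1)/n}$ on $B(0,r/k)\cap\overline\Omega$. Setting
\[
u_k := \phi_k - \frac{1}{|\Omega|}\int_\Omega \phi_k\,dx,
\]
we obtain $\int_\Omega u_k\,dx=0$ and $\|\nabla u_k\|_{L^n(\Omega)}\le 1$.

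An elementary integration gives $\int_\Omega \phi_k\,dx\lesssim r^n(\log k)^{-1/n}$, so the mean shift $m_k:=|\Omega|^{-1}\int_\Omega\phi_k\,dx$ satisfies $m_k(\log k)^{-(n-1)/n}\to 0$. Moreover the $C^1$ regularity of $\partial\Omega$ yields $\mathcal{H}_{n-1}(\partial\Omega\cap B(0,r/k))\ge c_1(r/k)^{n-1}$ for some $c_1>0$ and all large $k$. Since $|u_k|\ge c(1-o_k(1))(\log k)^{(n-1)/n}$ on $\partial\Omega\cap B(0,r/k)$,
\[
\int_{\partial\Omega} e^{\alpha|u_k|^{n/(n-1)}}\,d\mathcal{H}_{n-1} \ge c_1(r/k)^{n-1}\exp\!\bigl(\alpha c^{n/(n-1)}(\log k)(1+o(1))\bigr) = c_1\,r^{n-1}\,k^{\alpha(2/\omega_{n-1})^{1/(n-1)}-(n-1)+o(1)}.
\]
Because $\alpha>\beta_n=(n-1)(\omega_{n-1}/2)^{1/(n-1)}$ is equivalent to $\alpha(2/\omega_{n-1})^{1/(n-1)}>n-1$, the exponent of $k$ is strictly positive for all large $k$, and the right-hand side tends to $+\infty$ as $k\to\infty$, which gives \eqref{Integral Blows Up}.

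The main technical obstacle is bookkeeping the $o(1)$ errors so that they do not destroy the leading exponent: one must check that the three cumulative modifications---flattening the Dirichlet norm to be at most $1$, smoothing the Lipschitz corners into a $C^1$ function, and subtracting the mean to enforce $\int_\Omega u_k\,dx=0$---each alter the leading term $\alpha c^{n/(n-1)}\log k$ only by $o(\log k)$. The decisive quantitative inputs are the $C^1$-regularity of $\partial\Omega$, which gives both the half-ball approximation of the gradient norm and the lower bound $\mathcal{H}_{n-1}(\partial\Omega\cap B(0,r/k))\gtrsim(r/k)^{n-1}$, and the $L^1$-smallness of $\phi_k$, which makes the mean shift $m_k$ negligible compared with the peak value $(\log k)^{(n-1)/n}$.
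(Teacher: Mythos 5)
Your construction is essentially the paper's own proof: both center a truncated-logarithm Moser-type sequence at a boundary point, use the half-ball factor $\omega_{n-1}/2$ in the Dirichlet norm, show the mean shift is negligible, and lower-bound the boundary integral on the small cap where the function peaks, so that $\alpha>\beta_n$ forces the exponent of the blow-up parameter to be positive. The only differences are cosmetic (you use the pre-normalized Moser profile indexed by $k$ rather than normalizing $u_r$ by its Dirichlet norm, and you are slightly more careful about mollifying to get genuine $C^1$ regularity), so the argument is correct and matches the paper's route.
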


\medskip

\section{Existence of  the Trace Function}

Let $\Omega$ be a bounded domain of $\mathbb{R}^n$ with $C^{1,\alpha}$ boundary. We are trying to look for a solution of the torsion equation with Neumann boundary condition. More precisely, we consider the following functional

\begin{equation}\label{Functional}
  E(u) = \int_{\Omega} |\nabla u|^p dx- p \int_{\p \Omega} u d\mathcal{H}_{n-1},
\end{equation}
where $u \in W^{1,p}(\Omega) $ with $1 < p < n$ and $ \int_\Omega u dx = 0$. For simplicity, we denote such a function $u$ as $u \in W_m^{1,p}(\Omega)$. We have the following properties of such functional.

\begin{lemma}\label{Continuity}
  $E : W^{1,p}_m \to \mathbb{R}$ is continuous with respect to $|| \cdot ||_{W^{1,p}}$.
\end{lemma}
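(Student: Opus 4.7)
The plan is to show continuity term by term: write $E(u) = E_1(u) - p\, E_2(u)$ where $E_1(u) = \int_\Omega |\nabla u|^p\, dx$ and $E_2(u) = \int_{\partial\Omega} u\, d\mathcal{H}_{n-1}$, and prove each of $E_1, E_2$ is continuous on $W^{1,p}(\Omega)$ with respect to the $W^{1,p}$-norm (the mean-zero constraint defining $W^{1,p}_m$ plays no role in continuity, since $W^{1,p}_m$ is a closed subspace).

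For the first term, suppose $u_k \to u$ in $W^{1,p}(\Omega)$, so that $\nabla u_k \to \nabla u$ in $L^p(\Omega;\mathbb{R}^n)$. The reverse Minkowski inequality gives $\bigl|\|\nabla u_k\|_{L^p} - \|\nabla u\|_{L^p}\bigr| \le \|\nabla u_k - \nabla u\|_{L^p} \to 0$, and then the continuity of the map $t \mapsto t^p$ on $[0,\infty)$ yields $E_1(u_k) = \|\nabla u_k\|_{L^p}^p \to \|\nabla u\|_{L^p}^p = E_1(u)$.

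For the second term, I would invoke the standard trace theorem for $W^{1,p}$ on domains with $C^{1,\alpha}$ (or merely Lipschitz) boundary: the trace operator $T : W^{1,p}(\Omega) \to L^p(\partial\Omega, d\mathcal{H}_{n-1})$ is bounded and linear, so
\begin{equation*}
\|T u_k - T u\|_{L^p(\partial\Omega)} \le C(\Omega,p)\, \|u_k - u\|_{W^{1,p}(\Omega)} \longrightarrow 0.
\end{equation*}
Since $\partial\Omega$ is compact with $\mathcal{H}_{n-1}(\partial\Omega) < \infty$, Hölder's inequality (with conjugate exponent $p'$) gives
\begin{equation*}
\bigl| E_2(u_k) - E_2(u) \bigr| \le \mathcal{H}_{n-1}(\partial\Omega)^{1/p'}\, \|T u_k - T u\|_{L^p(\partial\Omega)} \longrightarrow 0,
\end{equation*}
and combining the two estimates concludes the proof.

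There is no substantive obstacle here: the argument is a direct application of the boundedness of the trace operator together with continuity of the $L^p$-norm, and both are standard under the assumed $C^{1,\alpha}$ regularity of $\partial\Omega$. The only point that deserves a brief mention is why the trace inequality is available in this setting, which is well known for $C^1$ (hence $C^{1,\alpha}$) domains and $1 < p < n$.
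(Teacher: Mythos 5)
Your proof is correct and rests on the same key ingredient as the paper, namely the boundedness of the trace operator $T : W^{1,p}(\Omega) \to L^p(\partial\Omega)$ together with control of the gradient term by the $W^{1,p}$-norm. In fact your write-up is the more complete of the two: the paper only records the estimate $E(u) \leq \|u\|_{W^{1,p}}^p + C\|u\|_{W^{1,p}}$ and asserts that continuity ``directly follows,'' whereas you actually carry out the sequential continuity argument term by term (reverse triangle inequality plus continuity of $t \mapsto t^p$ for the gradient term, linearity and boundedness of the trace plus H\"older for the boundary term), which is what the lemma genuinely requires.
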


\begin{proof}
The   continuity directly follows from the following estimate:

  \begin{eqnarray*}
    E(u) &=& \int_{\Omega} |\nabla u|^p dx- p \int_{\p \Omega} u d\mathcal{H}_{n-1} \\
         &\leq& ||u||_{W^{1,p}}^p + p |\Omega |^{\frac{p-1}{p}} ||u||_{L^p(\p \Omega)} \\
         &\leq & ||u||_{W^{1,p}}^p + C ||u||_{W^{1,p}},
  \end{eqnarray*}
  where we apply the classic trace inequality for Sobolev functions on the last line.
\end{proof}

\begin{lemma}\label{Coercivity}
  The functional $E : W^{1,p}_m \to \mathbb{R}^n$ is coercive, i.e. $E(u) \to \infty$ as $||u||_{W^{1,p}} \to \infty$
\end{lemma}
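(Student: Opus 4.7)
The plan is to bound $E(u)$ from below by a polynomial in $\|u\|_{W^{1,p}}$ whose leading term has degree $p>1$ and a positive coefficient; the linear correction from the boundary term will then be absorbed for large $\|u\|_{W^{1,p}}$.

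First, I would use the fact that $u\in W^{1,p}_m$, i.e.\ $\int_\Omega u\,dx=0$. On a bounded domain with $C^{1,\alpha}$ (in particular Lipschitz) boundary, the mean-zero Poincar\'e inequality gives a constant $C_P=C_P(\Omega,p)$ such that
\[
\|u\|_{L^p(\Omega)} \le C_P\,\|\nabla u\|_{L^p(\Omega)}\qquad \text{for all } u\in W^{1,p}_m.
\]
Consequently $\|\nabla u\|_{L^p(\Omega)}^p$ and $\|u\|_{W^{1,p}(\Omega)}^p$ are comparable on $W^{1,p}_m$: there is a constant $c_1=c_1(\Omega,p)>0$ with
\[
\|\nabla u\|_{L^p(\Omega)}^p \ge c_1\,\|u\|_{W^{1,p}(\Omega)}^p.
\]

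Next I would control the boundary term in the same way as in Lemma~\ref{Continuity}: by H\"older's inequality on $\partial\Omega$ and the classical trace inequality $\|u\|_{L^p(\partial\Omega)}\le C_T\|u\|_{W^{1,p}(\Omega)}$,
\[
\Big|\,p\!\int_{\partial\Omega} u\,d\mathcal{H}_{n-1}\Big|
\le p\,|\partial\Omega|^{\frac{p-1}{p}}\|u\|_{L^p(\partial\Omega)}
\le C_2\,\|u\|_{W^{1,p}(\Omega)},
\]
with $C_2=C_2(\Omega,p)$.

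Combining the two estimates yields
\[
E(u)\;\ge\;c_1\,\|u\|_{W^{1,p}(\Omega)}^p - C_2\,\|u\|_{W^{1,p}(\Omega)}.
\]
Since $p>1$, the right-hand side tends to $+\infty$ as $\|u\|_{W^{1,p}(\Omega)}\to\infty$, proving coercivity. The only non-routine step is the invocation of the mean-zero Poincar\'e inequality, which relies on the hypothesis $\int_\Omega u\,dx=0$ built into $W^{1,p}_m$; without it the estimate would fail on constants.
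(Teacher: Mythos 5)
Your proposal is correct and follows essentially the same route as the paper: the mean-zero Poincar\'e inequality to get $\|\nabla u\|_{L^p}^p\gtrsim\|u\|_{W^{1,p}}^p$, H\"older plus the trace inequality to bound the boundary term linearly, and the lower bound $E(u)\ge C_1\|u\|_{W^{1,p}}^p-C_2\|u\|_{W^{1,p}}$ with $p>1$. Your write-up is in fact slightly more explicit than the paper's about where the zero-mean hypothesis enters.
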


\begin{proof}
  For the first term in (\ref{Functional}), we have

  \begin{equation*}
    ||\nabla u||_{L^p}^p \gtrsim ||u||_{W^{1,p}}^p.
  \end{equation*}

  This follows from Poincar\'e's inequality. For the second term in (\ref{Functional}), we have from the proof of Lemma \ref{Continuity} that

  \begin{equation*}
   | \int_{\p \Omega} u | \lesssim ||u||_{W^{1,p}}.
  \end{equation*}

  Therefore, we obtain a lower bound estimate of the functional

  \begin{equation}\label{Lower bound estimate}
    E(u) \geq C_1 ||u||_{W^{1,p}}^p - C_2 ||u||_{W^{1,p}},
  \end{equation}
  since $p > 1$, this leads to the conclusion.
\end{proof}

\begin{lemma}\label{Convexity}
  $E : W^{1,p}_m \to \mathbb{R}^n$ is convex.
\end{lemma}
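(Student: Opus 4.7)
The plan is to split $E$ into two pieces and exhibit each as a convex functional on $W^{1,p}_m(\Omega)$. First I would isolate the boundary term $u \mapsto -p \int_{\partial\Omega} u \, d\mathcal{H}_{n-1}$: since the trace operator $W^{1,p}(\Omega) \to L^p(\partial\Omega)$ is linear and integration against $d\mathcal{H}_{n-1}$ is linear, this functional is affine (in fact linear), and any linear functional is automatically convex, so it satisfies $T(\lambda u + (1-\lambda)v) = \lambda T(u) + (1-\lambda) T(v)$ with equality.

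Next I would treat the Dirichlet term $u \mapsto \int_{\Omega} |\nabla u|^p \, dx$. The key pointwise ingredient is that the map $\xi \mapsto |\xi|^p$ on $\mathbb{R}^n$ is convex for every $p \geq 1$ (strictly convex for $p>1$), a consequence of the convexity of the norm combined with convexity and monotonicity of $t \mapsto t^p$ on $[0,\infty)$. Since $u \mapsto \nabla u$ is a linear operator, for $u,v \in W^{1,p}_m(\Omega)$ and $\lambda \in [0,1]$ we have $\nabla(\lambda u + (1-\lambda)v)(x) = \lambda \nabla u(x) + (1-\lambda) \nabla v(x)$ almost everywhere, and so
\begin{equation*}
|\nabla(\lambda u + (1-\lambda) v)(x)|^p \leq \lambda |\nabla u(x)|^p + (1-\lambda) |\nabla v(x)|^p.
\end{equation*}
Integrating this inequality over $\Omega$ immediately yields convexity of the Dirichlet term.

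Finally, since the sum of two convex functionals is convex, adding the two pieces gives $E(\lambda u + (1-\lambda)v) \leq \lambda E(u) + (1-\lambda) E(v)$, establishing the claim. There is no real obstacle in this argument; the only substantive input is the elementary pointwise convexity of $|\cdot|^p$, and the affine character of the boundary term. I would also note in passing that for $p>1$ the first inequality is strict whenever $\nabla u \neq \nabla v$ on a set of positive measure, which together with the mean-zero constraint gives \emph{strict} convexity of $E$ on $W^{1,p}_m(\Omega)$; this is the property that will be used later to guarantee uniqueness of the minimizer whose existence follows from Lemmas \ref{Continuity} and \ref{Coercivity}.
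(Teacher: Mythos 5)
Your argument is correct, and it is a cleaner route than the one the paper takes. The paper simply asserts that $\frac{d^2}{dt^2}E(u+t\phi)\big|_{t=0}>0$ for all $u,\phi\in W^{1,p}_m(\Omega)$, i.e.\ it checks convexity through the second variation along lines; this is formally quick but glosses over the fact that $\xi\mapsto|\xi|^p$ is not twice differentiable at $\xi=0$ when $1<p<2$, so the pointwise second derivative computation needs some care exactly where $\nabla u$ and the perturbation interact degenerately. Your decomposition avoids this entirely: the boundary term is linear (hence affine and trivially convex, using linearity of the trace), and the Dirichlet term is convex by the pointwise convexity of $|\cdot|^p$ composed with the linear map $u\mapsto\nabla u$, followed by integration. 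This is more elementary and requires no smoothness of the integrand. Your closing remark on strict convexity is also the right observation, since uniqueness of the minimizer (which the paper uses after combining Lemmas \ref{Continuity}, \ref{Coercivity} and \ref{Convexity}) needs strictness: if equality holds then $\nabla u=\nabla v$ a.e., so $u-v$ is constant on the (connected) domain $\Omega$, and the mean-zero constraint forces $u=v$; it is worth saying explicitly that connectedness of $\Omega$ is what turns ``locally constant'' into ``constant'' here. In short, your proof establishes the same statement by a direct convexity argument rather than the paper's second-variation check, and in doing so it is actually the more robust of the two.
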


\begin{proof}
  It is easy to verify that $\{ \frac{d^2}{dt^2} E(u+ t\phi) \} \vert_{t=0} > 0$ for any $u,\phi \in W_m^{1,p}(\Omega)$.
\end{proof}

Combining Lemmas \ref{Continuity},\ref{Coercivity} and \ref{Convexity}, we conclude that $E : W^{1,p}_m \to \mathbb{R}^n$ has a unique minimizer of $E$ in $W_m^{1,p}(\Omega)$. For simplicity, we denote this function as $w$, which has many good properties that will be crucial for our proof of the main results. For the first variation,

\begin{equation*}
  0= \{ \frac{d}{dt} E(w + t\phi) \} \vert_{t=0} = p \int_\Omega |\nabla w|^{p-2} \nabla w \nabla \phi - p \int_{\p \Omega} \phi.
\end{equation*}

Without the mean value zero constraint on $\phi$, we have the following

\begin{equation}\label{Constraint free}
  \int_\Omega |\nabla w|^{p-2} \nabla w \nabla \phi - \int_{\p \Omega} \phi = - \frac{|\p \Omega |}{|\Omega|} \int_{\Omega} \phi
\end{equation}
holds for any $\phi \in W^{1,p}(\Omega)$. In particular, for $\phi \in W_0^{1,p}(\Omega)$, after taking integration by part, we get

\begin{equation}\label{Interior property}
  \triangle_p w = div(|\nabla w|^{p-2}\nabla w) = \frac{|\p \Omega |}{|\Omega|}.
\end{equation}

Thus from (\ref{Constraint free}), for any $\phi \in W^{1,p}(\Omega)$, we have

\begin{equation*}
  -\int_\Omega div(|\nabla w|^{p-2} \nabla w) \phi + \int_{\p \Omega} \phi |\nabla w|^{p-2}\nabla w \cdot \overrightarrow{n} - \int_{\p \Omega} \phi = -\frac{|\p \Omega |}{|\Omega|} \int_{\Omega} \phi.
\end{equation*}

This means

\begin{equation}\label{Boundary Property}
  \int_{\p \Omega} \phi \( ( |\nabla w|^{p-2} \nabla w ) \cdot \overrightarrow{n} - 1 \) = 0.
\end{equation}

\medskip

\section{Proof of Theorem \ref{Main Result 1}}

From the boundary property (\ref{Boundary Property}) of $w$, we first convert the boundary integral in (\ref{Main Result 1}) to integral on $\Omega$. Indeed, we have the following

  \begin{eqnarray}
    \int_{\p \Omega} e^{\alpha |u|^{\frac{n}{n-1}}} d\mathcal{H}_{n-1}
      &=& \int_{\p \Omega} e^{\alpha |u|^{\frac{n}{n-1}}} | \nabla w |^{p-2}\nabla w \cdot \overrightarrow{n} d\mathcal{H}_{n-1}\nonumber \\
      &=& \int_{\Omega} div\( e^{\alpha |u|^{\frac{n}{n-1}}} | \nabla w |^{p-2}\nabla w \) dx\nonumber\\
      &=& \int_\Omega e^{\alpha |u|^{\frac{n}{n-1}}} \frac{\alpha n}{n-1} |u|^{\frac{1}{n-1}} \nabla u (| \nabla w |^{p-2}\nabla w) dx \nonumber\\
      &+& \int_\Omega e^{\alpha |u|^{\frac{n}{n-1}}} \triangle_p w dx  \label{Interior Integral}.
  \end{eqnarray}

Since $\triangle_p w = \frac{|\p \Omega |}{|\Omega|}$, from Theorem 1.2 of \cite{C} , we know that the second term in (\ref{Interior Integral}) is uniformly bounded provided that $\alpha< \beta_n < n \left( \frac{\omega_{n-1}}{2} \right)^{\frac{1}{n-1}}$. For the first term in (\ref{Interior Integral}), since $p < n$, from H\"older's inequality,

  \begin{eqnarray*}\label{Estimate for 1st term}
    &&\int_\Omega e^{\alpha |u|^{\frac{n}{n-1}}} \frac{\alpha n}{n-1} |u|^{\frac{1}{n-1}} \nabla u (| \nabla w |^{p-2}\nabla w) dx \\
      &\leq & (\int_\Omega e^{p^* \alpha |u|^{\frac{n}{n-1}}} |u|^{\frac{p^*}{n-1}} dx )^{1/p^*} ||\nabla u||_{L^n} ||\nabla w||_{L^p}^{p-1} \\
      &\lesssim & (\int_\Omega e^{p^* \alpha |u|^{\frac{n}{n-1}}} |u|^{\frac{p^*}{n-1}} dx )^{1/p^*} \\
      &\lesssim & \( \int_\Omega e^{ (\alpha p^* +\epsilon) |u|^{\frac{n}{n-1}} } dx \)^{1/p^*},
  \end{eqnarray*}
where $p^*$ is the Sobolev conjugate of $p$, i.e. $\frac{1}{p^*} = \frac{1}{p} - \frac{1}{n}$, $\epsilon$ is a fixed small enough number. Following a similar argument as we did in the estimate of the second term of (\ref{Interior Integral}), we find that once $(\alpha p^* +\epsilon) \leq n \left( \frac{\omega_{n-1}}{2} \right)^{\frac{1}{n-1}}$,  from \cite{C}, the first term of (\ref{Interior Integral}) is bounded. From the above argument, for any $\alpha < \beta_n = (n - 1) (\omega_{n-1}/2)^{\frac{1}{n-1}}$, we can pick $p$ close enough to $1$ (hence $p^*$ close enough to $\frac{n}{n-1}$), $\epsilon$ small enough such that

\begin{equation*}
  (\alpha p^* +\epsilon) \leq n \left( \frac{\omega_{n-1}}{2} \right)^{\frac{1}{n-1}},
\end{equation*}
which implies both the first term and second term in (\ref{Interior Integral}) are bounded and thus we complete our proof.

\medskip

\section{Proof of Theorem \ref{Main Result 2}}

For any two points $x,y \in \overline{\Omega}$, we denote $|x-y|$ as the usual distance in Euclidean space. In specific, when $x,y \in \p \Omega$, we denote $d(x,y)$ as the geodesic distance along the boundary surface between this two points. Since $\p \Omega$ is of class $C^{1,\alpha}$, in a small neighborhood of $y \in \p \Omega$, say $B_\delta (y)$, which is n-dimensional ball centered at y with radius $\delta$, we have for any $x \in \p \Omega$, $|x - y| \leq d (x,y) \leq (1+O(\delta)) |x-y|$.

\medskip

Fix $y\in \p \Omega$, we can directly assume $\delta = 1$ and everything works for arbitrary $\delta$ after a proper modification. Also, without loss of generality, we can assume that $B_1(y)$ are disjoint and the size of $\Omega$ is much larger than $B_1(y)$, i.e. $\frac{|B_1 (y)|}{|\Omega|} < \epsilon$, where $\epsilon$ is left to be chosen. We can then define our counter sequence $\{ u_r(x) \}$ in each ball as follows:

\medskip
For $x \in B_1(y) \cap \Omega$,

\begin{equation*}\label{Counter Sequence}
  u_r(x) = \begin{cases}
             1 \ \ \ &0 \leq |x-y| \leq r \\
             \log \frac{1}{|x-y|}/ \log \frac{1}{r} \ \ \ &r \leq |x-y| \leq 1 \\
             0 \ \ \ &1 \leq |x-y|,
           \end{cases}
\end{equation*}

and we further let $u_r$ vanish everywhere else. Due to the $C^{1,\alpha}$ boundary condition, $| \frac{1}{|\Omega|} \int_{\Omega} u_r dx| < o(\epsilon)$ and following an easy calculation, the Dirichlet norm of $u_r$ is

\begin{eqnarray*}
  ||\nabla u_r||_{L^n}^n &=& \int_{B_1(y) \cap \Omega} |\nabla u_r|^n dx  \\
                         &=& \frac{1}{2} (\log \frac{1}{r})^{-(n-1)} \omega_{n-1} ( 1 + o(\epsilon) ).
\end{eqnarray*}

Now we test the function $(u_r - \frac{1}{| \Omega|}\int_{\Omega} u_r dx)/||\nabla u_r||_{L^n}$ into the integral of (\ref{Main Result 1}), for any $\alpha > \beta_n$, we have the following estimate

\begin{eqnarray*}\label{Estimate of The Counter Sequence}
    && \int_{\p \Omega} e^{\alpha |(u_r - \frac{1}{|\Omega|}\int_{ \Omega} u_r dx)/||\nabla u_r||_{L^n}|^{\frac{n}{n-1}}} d\mathcal{H}_{n-1} \\
    &\geq & \int_{B_r(y) \cap \p \Omega} e^{\frac{\alpha (1 - o(\epsilon))}{||\nabla u_r||_{L^n}^{\frac{n}{n-1}}}} d\mathcal{H}_{n-1} \\
    &\geq & |B_r(y) \cap \p \Omega | e^{\alpha (1 - o(\epsilon)) \log(1/r) (\omega_{n-1}/2)^{-\frac{1}{n-1}}} \\
    &\gtrsim & r^{n-1} \( \frac{1}{r} \)^{\alpha (1 - o(\epsilon)) (\omega_{n-1}/2)^{-\frac{1}{n-1}}} \\
    &\to & \infty  \ \ \ as \ r \to 0.
\end{eqnarray*}

The last line is true once we pick $\epsilon$ small enough such that $\alpha (1 - o(\epsilon)) > \beta_n = (n-1)(\omega_{n-1}/2)^{\frac{1}{n-1}}$.


\begin{thebibliography}{99}

\bibitem {Ad}D. Adams, A sharp inequality of J. Moser for higher order
derivatives. Ann. of Math. (2) 128 (1988), no. 2, 385--398.

\bibitem {Au} G. Auchmuty. Sharp boundary trace inequalities. Proc. Roy. Soc. Edinburgh Sect. A 144 (2014), no. 1, 1–12.

\bibitem {Be}A. Beurling, Études sur un problème de majoration. Imprimerie Almquist and Wiksell, 1933.

\bibitem {C} A. Cianchi, Moser-Trudinger inequalities without boundary conditions and isoperimetric problems. Indiana Univ. Math. J. 54 (2005), no. 3, 669-705.
 
  
\bibitem {CM} S-Y. A. Chang and D. E. Marshall. On a sharp inequality concerning the Dirichlet integral.
Amer. J. Math. 107 (1985), no. 5, 1015–1033.



 



\bibitem {Mo}J. Moser,  A sharp form of an inequality by N. Trudinger.
Indiana Univ. Math. J. 20 (1970/71), 1077–1092.

\bibitem{ON}R. O'Neil, Convolution operators and L(p,q) spaces. Duke Math. J. 30 (1963), 129–142.

\bibitem {Po}S.I. Poho\v{z}aev, \textit{On the Sobolev embedding in the case
}$pl = n$, Proceedings of the Technical Scientific
Conference on Advances of Scientific Research 1964-1965. Mathematics Section, 158-170, Moskov.
Energet. Inst., Moscow, 1965

 

\bibitem {Tru}N. Trudinger, On imbeddings into Orlicz spaces and some applications.
J. Math. Mech. 17 1967 473–483.

\bibitem {Yu}V.I. Yudovic, Some estimates connected with integral
operators and with solutions of elliptic equations. (Russian) Dokl. Akad. Nauk
SSSR 138 1961 805--808.

\end{thebibliography}
\end{document}